\def\thtext#1{
  \catcode`@=11
  \gdef\@thmcountersep{. #1}
  \catcode`@=12
}
\def\threst{
  \catcode`@=11
  \gdef\@thmcountersep{.}
  \catcode`@=12
}
\theoremstyle{plain}
\newtheorem{thm}{Theorem}
\newtheorem{prop}{Proposition}[section]
\newtheorem{cor}[prop]{Corollary}
\newtheorem{ass}[prop]{Assertion}
\newtheorem{lem}[prop]{Lemma}
\theoremstyle{definition}
\newtheorem{examp}{Example}
\newtheorem{rk}[prop]{Remark}
 \def\.{.\spacefactor\@m}
\newcommand{\e}{\varepsilon}
\newcommand{\g}{\gamma}
\newcommand{\om}{\omega}
\renewcommand{\r}{\rho}
\newcommand{\cD}{\mathcal{D}}
\newcommand{\cP}{\mathcal{P}}
\newcommand{\cT}{\mathcal{T}}
\newcommand{\cTW}{\mathcal{T\!W}}
\newcommand{\N}{\mathbb{N}}
\newcommand{\R}{\mathbb{R}}
\newcommand{\rom}[1]{{\em #1}}
\newcommand{\0}{\emptyset}
\renewcommand{\c}{\circ}
\newcommand{\oPi}{\stackrel{\raise-2pt\hbox{$\c$}}\Pi}
\newcommand{\oW}{\stackrel{\raise-2pt\hbox{$\c$}}W}
\newcommand{\sm}{\setminus}
\renewcommand{\sp}{\supset}
\renewcommand{\ss}{\subset}
\renewcommand{\min}{{\operatorname{min}}}
\newcommand{\mst}{{\operatorname{mst}}}
\newcommand{\MST}{\operatorname{MST}}
\title{Minimal Spanning Trees on Infinite Sets}
\author{A.\,O.~Ivanov \and A.\,A.~Tuzhilin}
\date{}							
\begin{document}
\maketitle
\begin{abstract}
Minimal  spanning trees on infinite vertex sets are investigated. A criterion for minimality of a spanning tree having a finite length is obtained, which generalizes the corresponding classical result for finite sets. It is given an analytic description of the set of all infinite metric spaces which a minimal spanning tree exists for. A sufficient condition for a minimal spanning tree existence is obtained in terms of distances achievability between partitions elements of the metric space under consideration.  Besides, a concept of locally minimal spanning tree is introduced, several properties of such trees are described, and relations of those trees with (globally) minimal spanning trees are investigated.
\end{abstract}

\section*{Introduction}
\markright{\thesection.~Introduction}
Constructing of a minimal spanning tree connecting a given finite set of points of a metric space is a classical problem of combinatorial optimization and discrete geometry possessing many different practical applications, see, for example,~\cite{PrSh}.  From the combinatorial point of view, this problem is reduced to the problem of finding of a minimal spanning tree in a weighted graph, and in this form it was solved in the beginning of the previous century, see~\cite{Bor}.  The presence of polynomial and easy-to-describe algorithms constructing a solution (for example, Kruskal and Prim algorithms) gives almost no information concerning possible structure of the minimal spanning trees, unfortunately. Investigation of geometry of minimal spanning trees gives an opportunity to accelerate the construction algorithm in some important cases, first of all in the case of the Euclidean plane, see~\cite{Sha}.

Recently, similar objects for infinite metric spaces attract more interest. Many paper are devoted to generalizations of the concept of minimal spanning tree to the case of countable subsets of a metric space or of weighted graphs with countable sets of vertices and edges generated by some random process, see for example~\cite{AldSte} and~\cite{Alex}. In paper~\cite{AldSte}, for a countable subset $M$ of a metric space, by means of an analogue of Prim algorithm, some spanning forest is constructed (in the case of finite $M$ this forest becomes a minimal spanning tree) and some average topological and metric properties of the forest are investigated, such as an average vertex degree and an asymptotic behavior of the length. In paper~\cite{Alex} an analogue of the minimal spanning tree is defined for some natural class of weighted graphs in some other terms, namely, by means of so-called  {\it creek crossing rule\/}: an edge is chosen, if and only if its vertices can not be connected by a path consisting of edges having smaller weights. Under some natural assumptions, see~\cite{Alex},  it is shown that the graph constructed of all such edges is a forest. Also, some results concerning the structure of this forest, namely, concerning  the number of so-called topological ends and infinite clusters, are obtained.

Another good motivation for infinite minimal spanning trees studying is the Gilbert--Pollack Conjecture on the Steiner ratio of Euclidean plane, which is still open~\cite{ITOpen}, together with the fact that the best known estimate for the Steiner ratio of the Euclidean three-space is achieved at an infinite set (as a limiting value), see~\cite{SmSm}. In~\cite{INT} the following natural question is stated: Describe infinite metric spaces which are the vertices of spanning trees with finite length (in~\cite{INT} those spaces are referred as {\it good\/}). In~\cite{INT} the answer is obtained in the form of an integral criterion. In fact, the integral expression found gives the infimum of the lengths of the trees spanning the initial metric space, in particular, it gives the length of a minimal spanning tree providing such a tree does exist. But the following question has remained open: What infinite metric spaces can be spanned by minimal spanning trees?  It is easy to construct an example of a good space which can not be spanned by a minimal spanning tree. Indeed, it suffices to take the sequence $1/n$, $n\in\N$, in the real line, together with its limiting point $0$. In the present paper we get a progress in this problem solution.

It is well-known that a spanning tree on a finite subset $M$ of a metric space is minimal, if and only if the length of any its edge is equal to the distance between the corresponding components of the set  $M$. We generalize this criterion to the case of infinite subsets of a metric space (Theorem~\ref{thm:crit_mst}). Then we obtain another criterion of a spanning tree $T$ minimality, which is based on  comparison of the initial metric $\r$ with the special metric $\r^T_\infty$ constructed by $T$ and $\r$ (Theorem~\ref{thm:crit_mst_infty}). As a corollary, we give an analytic description of the set of all infinite metric spaces which a minimal spanning tree exists for (Corollary~\ref{cor:MSTctirMax1}). As another example of an application of Theorem~\ref{thm:crit_mst_infty}  we show that any tree with at most countable set of edges is isomorphic to some minimal spanning tree (Example~\ref{examp:any_tree}). Further, we obtain a sufficient condition for a minimal spanning tree existence in terms of achievability of distances between elements of partitions of the initial merit space  (Theorem~\ref{thm:distance}). In the closing Section  we introduce a concept of locally minimal spanning tree, describe some its properties and investigate relations between such trees and (globally) minimal spanning trees  (Theorem~\ref{thm:exact} and Corollary~\ref{cor:lmst=mst}).

\section{Graphs on Finite and Infinite Sets}
\markright{\thesection.~Graphs on Finite and Infinite Sets}
We consider (simple) graphs with  \textbf{arbitrary} vertex sets, not necessary finite.

Let  $V$ be an arbitrary set, $V^{(2)}$ be the family of two-elements subsets of $V$, and $E\ss V^{(2)}$. By a (simple) \emph{graph\/} we call a pair $G=(V,E)$ of such sets. As always, elements of the set  $V$ are referred as  \emph{vertices}, and elements of the set $E$ are referred as  \emph{edges\/} of the graph $G$. If the vertex set and/or the edge set of the graph $G$ are not given explicitly, then we use the notations $V(G)$ and $E(G)$ for them, respectively. Two-elements sets of the form $\{v,w\}$ we denote by $vw$. If $vw\in E$, then the vertices $v$ and $w$ are said to be \emph{adjacent\/} or \emph{neighboring\/}; we also say that those vertices are \emph{connected by an edge\/} or \emph{joined by an edge}. More generally, if $V_1,\,V_2\ss V$, $v_i\in V_i$, then we say that  \emph{$v_1v_2$ connects $V_1$ and $V_2$}. If $v\in e\in E$, then the vertex $v$ and the edge $e$ are said to be \emph{incident\/};  we also say in this case that  \emph{$v$ is a vertex of the edge $e$}, or that the \emph{edge $e$ goes out of the vertex $v$}. The cardinality of the set of edges incident to a vertex $v$ is referred as the \emph{degree of  $v$} and is denoted by  $\deg(v)$.

A finite sequence of vertices $v=v_0,v_1,\ldots,v_n=w$ such that $v_{i-1}v_i\in E$ for every  $i$ is called a \emph{route joining  $v$ and $w$}; if $v=w$, then the route is said to be \emph{closed\/} or {\em cyclic\/}; and if $v\ne w$, then the route is said to be  \emph{unclosed\/}; an unclosed route all whose vertices are pairwise distinct is called a \emph{path\/}; a closed route all whose edges are pairwise distinct is called a \emph{cycle}. If any pair of vertices of a graph is connected by some route, then the graph is said to be \emph{connected\/}; if a graph does not contain cycles, then it is called a  \emph{forest\/}; a connected forest is called a  \emph{tree}. It is easy to see, that each pair of vertices  $v$ and $w$ of an arbitrary tree $T$ is connected by a unique path. By $T[v,w]$ we denote this path in $T$.

We use several graph operations. Let $G=(V,E)$ be an arbitrary graph and $E'\ss V^{(2)}$. Then by $G\sm E'$ and $G\cup E'$ we denote the graphs $(V,E\sm E')$ and $(V,E\cup E')$, respectively; we say that the first graph is obtained from the graph $G$ by \emph{discarding the family $E'$ of edges}, and the second graph is obtained by \emph{adding the edges from the set $E'$}. If  $E'=\{e\}$, then the graphs $G\sm\{e'\}$ and $G\cup\{e'\}$ are denoted by $G\sm e$ and $G\cup e$, respectively. Further, let  $E',\,F'\ss E$, then by $G[E'\to F']$ we denote the graph $(G\sm E')\cup F'$; if $E'=\{e\}$ and  $F'=\{f\}$, then we put  $G[e\to f]=G\bigl[\{e\}\to\{f\}\bigr]$.

If $G_1=(V_1,E_1)$ and $G_2=(V_2,E_2)$, then the graph $(V_1\cup V_2,E_1\cup E_2)$ is denoted by $G_1\cup G_2$ and is called the \emph{union of the graphs $G_1$ and $G_2$}. If $V_1\cap V_2=\0$, then we write $G_1\sqcup G_2$ instead of $G_1\cup G_2$ and call the resulting graph by the \emph{disjoint union of the graphs $G_1$ and $G_2$}.

If  $G=(V,E)$ is an arbitrary graph, and $V'\ss V$, $E'\ss E$, then the graph $G'=(V',E')$ is referred as a  \emph{subgraph of the graph $G$}, and we write $G'\ss G$. The relation of being a subgraph is an order relation, and in what follows we understand  \emph{maximal\/} or  \emph{minimal subgraphs with respect to inclusion\/} in the sense of this ordering. A maximal with respect to inclusion connected subgraph of a graph $G$ is called a  \emph{connected component of the graph $G$}; each graph can be uniquely represented as disjoint union of its connected components; in particular, each forest $G$ can be uniquely decomposed into disjoint union of its maximal subtrees which are referred as the  \emph{trees of the forest $G$}. In particular, if $T=(V,E)$ is a tree, and $E'\ss E$, then the graph $T\sm E'$ is a forest with some trees $T_i=(V_i,E_i)$, and $\{V_i\}$ is a partition of the set  $V$. This partition we denote by $\cP_T(E')$. If $E'=\{e\}$, then we put $\cP_T\bigl(\{e\}\bigr)=\cP_T(e)$. Notice also, that $\{E_i\}\cup\{E'\}$ is a partition of the edge set $E$.

\begin{rk}
In what follows we sometimes consider paths and cycles as the corresponding subgraphs. In this sense we use the notations  $V(\g)$, $E(\g)$, $V(C)$, $E(C)$ for the vertex sets and edge sets of a path $\g$ and a cycle $C$, respectively.
\end{rk}

We collect the necessary properties of trees in the following Assertion.

\begin{ass}\label{ass:prop_trees}
Let $T=(V,E)$ be an arbitrary tree.
\begin{enumerate}
\item\label{ass:prop_trees:item:1} If $E'\ss E$ and $\cP_T(E')=\{V_i\}$, then the cardinality of the set $\{V_i\}$ is greater by $1$ than the cardinality of the set  $E'$. Besides, any pair $V_i$, $V_j$ is connected by at most one edge from $E$, and if such an edge does exist, then it lies in $E'$. In particular, if $E'=\{e\}$, then  $\cP_T(e)=\{V_1,V_2\}$, and the sets $V_1$ and $V_2$ are connected by a unique edge of the tree $T$, namely, by the edge $e$.
\item\label{ass:prop_trees:item:2} If $V=W_1\sqcup W_2$, $w_i\in W_i$, then there exists an edge $f\in E\bigl(T[w_1,w_2]\bigr)$, connecting  $W_1$ and $W_2$. And if  $\{W_1,W_2\}=\cP_T(e)$ for some edge $e\in E$, then the edge $f$ is unique and coincides with $e$.
\item\label{ass:prop_trees:item:3} Let $v,\,w\in V$, $v\ne w$, and $e\in E$. Then the graph $T[e\to vw]$ is a tree, if and only if $e\in E\bigl(T[v,w]\bigr)$.
\item\label{ass:prop_trees:item:4} If $\{V_1,V_2\}=\cP_T(e)$ for some $e\in E$, and $v_i\in V_i$, then $e\in E\bigl(T[v_1,v_2]\bigr)$.
\item\label{ass:prop_trees:item:5} Under the assumptions of the previous Item, put $S=T[e\to v_1v_2]$, then $\cP_S(v_1v_2)=\cP_T(e)$.
\end{enumerate}
\end{ass}

\section{Metric Graphs, Minimal Spanning Trees}
\markright{\thesection.~Metric Graphs, Minimal Spanning Trees}
For an arbitrary set $M$ by  $\cD(M)$ we denote the set of all metrics on $M$. There exists a natural partial order on $\cD(M)$, namely, for $\r_1,\,\r_2\in\cD(M)$ we put $\r_1\le\r_2$, if the inequality $\r_1(x,y)\le\r_2(x,y)$ holds for any $x,\,y\in M$.

Let  $G=(V,E)$ be an arbitrary graph, and $\r\in\cD(V)$. Then for any edge $e=vw\in E$ its  \emph{length $\r(e)$} is defined as follows: $\r(e)=\r(v,w)$; the value $\r(G)=\sum_{e\in E}\r(e)$ is called the \emph{length of the graph $G$} (with respect to the metric $\r$).

Now let $T=(V,E)$ be a tree, and $\r\in\cD(V)$. An edge $e\in E$ is called {\em exact\/} (with respect to the metric $\r$), if $\r(e)=\r(V_1,V_2)$, where $\{V_1,V_2\}=\cP_T(e)$.

Let $M$ be an arbitrary set. By $\cT(M)$ we denote the set of all trees with vertex set $M$. Let $\r\in\cD(M)$. The value $\mst(M,\r)=\inf_{T\in\cT(M)}\r(T)$ is referred as the \emph{length of minimal spanning tree}, and a tree $T\in\cT(M)$ such that $\r(T)=\mst(M,\r)<\infty$ is called a \emph{minimal spanning tree on $(M,\r)$}. By $\MST(M,\r)$ we denote the set of all minimal spanning trees on $(M,\r)$. Notice that the set $\MST(M,\r)$ can be empty, and that the length of minimal spanning tree is defined in the case of non-existence of a minimal spanning tree also. Put $\cD_\mst(M)=\bigl\{\r\in\cD(M)\mid\MST(M,\r)\ne\0\bigr\}$. One of the problems discussed in this paper is to describe the metrics belonging to $\cD_\mst(M)$. A metric space $(M,\r)$ is called {\em good}, if  $\mst(M,\r)<\infty$, see~\cite{INT}. Due to definitions, all the metrics from $\cD_\mst(M)$ are good. It is not difficult to see that for any good space $(M,\r)$ the set $M$ is at most countable.

\begin{ass}\label{ass:prop_mst}
Let $(M,\r)$ be an arbitrary metric space, and $T=(M,E)\in\MST(M,\r)$. Then
\begin{enumerate}
\item\label{ass:prop_mst:item:1} for any $v,\,w\in M$ and any $e\in T[v,w]$ the inequality $\r(e)\le\r(v,w)$ is valid\/\rom;
\item\label{ass:prop_mst:item:2} for any $e\in E$, $\{M_1,M_2\}=\cP_T(e)$, we have $\r(e)=\r(M_1,M_2)$, i.e., each edge of a minimal spanning tree is exact\/\rom;
\item\label{ass:prop_mst:item:3} if $M=M'_1\sqcup M'_2$, $v_i\in M'_i$, and $\r(v_1,v_2)=\r(M'_1,M'_2)$, then there exists  $T'=(M,E')\in\MST(M,\r)$ such that $v_1v_2\in E'$.
\item\label{ass:prop_mst:item:4} under the assumptions of the previous Item, if the pair $v_1v_2$ is unique in addition, then $v_1v_2\in E'$ for any $T'=(M,E')\in\MST(M,\r)$, in particular, for $T'=T$.
\end{enumerate}
\end{ass}

\begin{proof}
\eqref{ass:prop_mst:item:1} Due to Assertion~\ref{ass:prop_trees}, Item~\eqref{ass:prop_trees:item:3}, the graph $T[e\to vw]$ is a spanning tree, therefore $\r\bigl(T[e\to vw]\bigr)\ge\r(T)$, and hence $\r(v,w)\ge\r(e)$.

\eqref{ass:prop_mst:item:2} Choose arbitrary $v_i\in M_i$. Then, due to Assertion~\ref{ass:prop_trees}, Item~\eqref{ass:prop_trees:item:4}, $e\in E\bigl(T[v_1,v_2]\bigr)$. It remains to apply Item~\eqref{ass:prop_mst:item:1} of this Assertion.

\eqref{ass:prop_mst:item:3} In accordance with Assertion~\ref{ass:prop_trees}, Item~\eqref{ass:prop_trees:item:2}, there exists an edge $f\in E\bigl(T[v_1,v_2]\bigr)$ connecting $M'_1$ and $M'_2$. But then $\r(v_1,v_2)\le\r(f)$. Due to Item~\eqref{ass:prop_mst:item:1} of this Assertion, the inverse inequality is valid, and hence $\r(f)=\r(v_1,v_2)$. Due to Assertion~\ref{ass:prop_trees}, Item~\eqref{ass:prop_trees:item:3}, the graph $T[f\to v_1v_2]$ is a spanning tree, and, as we have shown just now, $\r\bigl(T[f\to v_1v_2]\bigr)=\r(T)$, therefore $T[f\to v_1v_2]\in\MST(M,\r)$.

\eqref{ass:prop_mst:item:4} Under the notations of the previous Item, $\r(f)=\r(v_1,v_2)$. Since the pair $v_1v_2$ is uniquely defined, then we have $f=v_1v_2$, and hence $v_1v_2\in E$. Since $T$ is an arbitrary one, then we obtain the result required.
\end{proof}

Let  $A$ and $B$ be two non-empty subsets of a metric space $(M,\r)$. We say that the distance between $A$ and $B$ \emph{is attained at $ab$}, if there exist $a\in A$ and $b\in B$ such that $\r(a,b)=\r(A,B)$.

Assertion~\ref{ass:prop_mst} can be used to prove non-existence of minimal spanning trees. Let us give several examples.

\begin{examp}\label{examp:sequence_tent_to_0}
Let $M=\{1/n\}_{n\in\N}\cup\{0\}\ss[0,1]$, and let the distance function be standard: $\r(x,y)=|x-y|$. Show that $\MST(M,\r)=\0$.

Assume the contrary,  and let $T=(M,E)\in\MST(M,\r)$. Notice that for any partition  of $M$ into $M'_1=\{1/k\}_{k\le n}$ and $M'_2=\{0\}\cup\{1/k\}_{k>n}$, $k\in\N$, the pair $\bigl\{1/n,1/(n+1)\bigr\}$ is the unique one which the distance between  $M'_1$ and $M'_2$ is attained at. Due to Assertion~\ref{ass:prop_mst}, Item~\eqref{ass:prop_mst:item:4}, this pair is an edge of the tree  $T$.

Let $e\in E$ be an arbitrary edge going out of  $0$, and $\{M_1,M_2\}=\cP_T(e)$. Assume that $1\in M_1$. Since each  vertex $1/n$ is connected with $1$ by a path in $T$, which does not pass through $e$, then we conclude that $\{1/n\}_{n\in\N}\ss M_1$, and hence, $M_2$ consists of at most one vertex $0$. Since both $M_i$ are not empty, then we have: $M_1=\{1/n\}_{n\in\N}$ and $M_2=\{0\}$.

In accordance with Assertion~\ref{ass:prop_mst}, Item~\eqref{ass:prop_mst:item:2}, the edge $e$ is exact, but $\r(M_1,M_2)=0$, a contradiction.
\end{examp}

\begin{examp}
Let $M=\{1/n\}_{n\in\N}\cup\{-1/n\}_{n\in\N}\ss[-1,1]$, and let the distance function be standard: $\r(x,y)=|x-y|$. Let us show that $\MST(M,\r)=\0$.

Assume the contrary again, i.e\. let there exist $T=(M,E)\in\MST(M,\r)$. We put $M_1=\{1/n\}_{n\in\N}$, $M_2=\{-1/n\}_{n\in\N}$, then $M=M_1\sqcup M_2$. As in Example~\ref{examp:sequence_tent_to_0}, show that each pair $\bigl\{1/n,1/(n+1)\bigr\}$ is an edge of $T$ and each pair $\bigl\{-1/n,-1/(n+1)\bigr\}$ is an edge of $T$ also. Therefore, if $e$ is an arbitrary edge of the tree $T$ connecting $M_1$ and $M_2$, then $\cP_T(e)=\{M_1,M_2\}$. Again the edge $e$ must be exact, but $\r(M_1,M_2)=0$, a contradiction.
\end{examp}

The following result generalizes the both above Examples.

\begin{ass}
Let  $(M,\r)$ be a metric space, and assume that the set $M$ is infinite. Assume that there exist $x,\,y\in M$ such that $\r(x,y)=\mst(M,\r)$, then $\MST(M,\r)=\0$.
\end{ass}

\begin{proof}
Assume the contrary, i.e., let there exist $T=(M,E)\in\MST(M,\r)$. Consider the path $\g=T[x,y]$, then $\r(T)>\r(\g)$, because $M$ is infinite. On the other hand, $\r(\g)\ge\r(x,y)=\mst(M,\r)=\r(T)$, a contradiction.
\end{proof}

By $G_\min(M,\r)$ we denote the graph $(M,E)$, where $vw\in E$, if and only if there exists a partition $\{M',M''\}$ of the set $M$ into non-empty sets, such that the distance between those subsets is attained at $vw$.

\begin{ass}\label{ass:MST_and_min_graph}
Assume that $\MST(M,\r)\ne\0$, then the graph $G_\min(M,\r)$ is connected.
\end{ass}

\begin{proof}
Indeed, let $T\in\MST(M,\r)$, then, due to Assertion~\ref{ass:prop_mst}, Item~\eqref{ass:prop_mst:item:2}, $T\ss G_\min(M,\r)$.
\end{proof}

\begin{rk}
The inverse statement to Assertion~\ref{ass:MST_and_min_graph} does not hold. It is very easy to construct an example of a non good set. It suffices to consider a countable set $M$ endowed with the distance function $\rho$ which is equal to $1$ at each pair of distinct points from $M$. Then for any point $m\in M$ the distance between  $\{m\}$ and $M\setminus\{m\}$ is equal to one, therefore the graph $G_\min(M,\r)$ is the complete graph on $M$. But the metric space $(M,\rho)$ is not good, hence  $\MST(M,\r)=\0$.

Let us also construct an example of a good space with the same property. Put $N=\{1/n\}_{n\in\N}\cup\{0\}\ss[0,1]$, and let $M=\{x\}\sqcup N$. Define on $M$ the following metric $\r$: we put $\r$ be equal to the standard Euclidean metric on $N$, and put $\r(x,0)=\r(x,1/n)=1$ for all $1/n\in N$. Then  $\r(x,N)=1$, and hence all the pairs $\{x,t\}$, $t\in N$, are edges of the graph $G_\min(M,\r)$, therefore this graph is connected.

Show that $\MST(M,\r)=\0$. Assume the contrary, i.e., there exists $T=(M,E)\in\MST(M,\r)$. Put $M'_1=\{1/n\}_{n\in\N}$ and $M'_2=\{0,x\}$. Reasonings similar to the ones given in the discussion of Example~\ref{examp:sequence_tent_to_0}, show that if some edge $e\in E$ joins $M'_1$ and $M'_2$, and $\cP_T(e)=\{M_1,M_2\}$, $1\in M_1$, then $M_1\sp M'_1$. Therefore, either $M_2=\{0,x\}$, either $M_2=\{0\}$, or $M_2=\{x\}$. But the cases$M_2=\{0,x\}$ and $M_2=\{0\}$ can not appear, because in those cases $\r(M_1,M_2)=0$, that contradicts to Assertion~\ref{ass:prop_mst}, Item~\eqref{ass:prop_mst:item:2}. In particular, $0x\not\in E$. Thus, it remains to consider the case, when $M_2=\{x\}$, $M_1=N$, and $x$ is connected by a unique edge with some $1/n$ (namely, by the edge $e$).

Now consider $M''_1=\{1/n\}_{n\in\N}\cup\{x\}$ and $M''_2=\{0\}$, and let an edge $e'\in E$ connects $M''_1$ and $M''_2$. Since any two points from $M''_1$ are joined by a path in $T$, which does not pass through $e'$, then we have $\cP_T(e')=\{M''_1,M''_2\}$. But  $\r(M''_1,M''_2)=0$, and we obtain a contradiction with Assertion~\ref{ass:prop_mst}, Item~\eqref{ass:prop_mst:item:2} again. Thus, $\MST(M,\r)=\0$.
\end{rk}

\begin{ass}
If $\MST(M,\r)\ne\0$, then, for any partition $\{M'_1,M'_2\}$ of the set $M$ such that $\r(M'_1,M'_2)>0$, the distance between  $M'_1$ and $M'_2$ is attained.
\end{ass}

\begin{proof}
Assume the contrary, i.e., let there exist $T=(M,E)\in\MST(M,\r)$, then the lengths of all the edges of the tree $T$ connecting $M'_1$ and $M'_2$ are greater than $\r(M'_1,M'_2)$. Since the length of $T$ is finite, then the number of such edges is finite. Denote these edges by $e_1,\ldots,e_k$.

Due to the definition of the distance between $M'_1$ and $M'_2$, there exist $v_j\in M'_j$, $j=1,\;2$, such that $\r(v_1,v_2)<\r(e_i)$ for all $i=1,\ldots,k$. In accordance with Assertion~\ref{ass:prop_trees}, Item~\eqref{ass:prop_trees:item:2}, some edge $e_i$ is contained in $E\bigl(T[v_1,v_2]\bigr)$. Item~\eqref{ass:prop_trees:item:3} of the same Assertion implies that the graph $T[e_i\to v_1v_2]$ is a spanning tree, but $\r\bigl(T[e_i\to v_1v_2]\bigr)<\r(T)$, a contradiction.
\end{proof}

\begin{thm}\label{thm:crit_mst}
Let $T=(M,E)$ be an arbitrary tree and $\r\in\cD(M)$. Assume that $\r(T)<\infty$. Then $T\in\MST(M,\r)$, if and only if all the edges of the tree $T$ are exact.
\end{thm}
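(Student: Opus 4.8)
The plan is to treat the two implications separately; the forward one is immediate, and the converse carries all the content.

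For the \emph{only if} direction I would assume $T\in\MST(M,\r)$. Then exactness of every edge is precisely the content of Assertion~\ref{ass:prop_mst}, Item~\eqref{ass:prop_mst:item:2}, so nothing remains to be proved. For the \emph{if} direction I assume every edge of $T$ is exact; since $\r(T)<\infty$ is given, it suffices to prove $\r(T)\le\r(S)$ for an arbitrary $S\in\cT(M)$, for then $\r(T)=\mst(M,\r)<\infty$ and hence $T\in\MST(M,\r)$. One may assume $\r(S)<\infty$, as otherwise the inequality is trivial. The idea is an infinite exchange argument realised through Hall's marriage theorem. I would set up a bipartite correspondence whose left vertices are the edges of $T$ and whose right vertices are the edges of $S$, joining $e\in E$ to $f\in E(S)$ whenever $f$ connects the two parts of $\cP_T(e)$. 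The role of exactness is that for $\{M_1,M_2\}=\cP_T(e)$ and any such $f$ one has $\r(e)=\r(M_1,M_2)\le\r(f)$, because $f$ connects $M_1$ and $M_2$. Thus an injection $\phi$ from $E$ into $E(S)$ respecting this correspondence would give $\r(T)=\sum_{e\in E}\r(e)\le\sum_{e\in E}\r\bigl(\phi(e)\bigr)\le\sum_{f\in E(S)}\r(f)=\r(S)$, the middle step being a mere reindexing of a series of nonnegative terms.

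The crux is therefore to verify Hall's condition for this correspondence, which I would do for every finite set $F\ss E$ and then pass to the limit. For a finite $F'\ss E$ discard these edges from $T$, obtaining by Assertion~\ref{ass:prop_trees}, Item~\eqref{ass:prop_trees:item:1}, a partition $\cP_T(F')=\{V_0,\ldots,V_m\}$ with $m=|F'|$. Contracting each $V_j$ to a point turns $T$ into a tree on $m+1$ supervertices with edge set $F'$ and turns $S$ into a connected graph on the same $m+1$ supervertices; the latter therefore has at least $m$ edges running between distinct $V_j$'s. Each such edge of $S$, joining some $V_a$ to $V_b$ with $a\ne b$, connects the two parts of $\cP_T(e)$ for every $e\in F'$ lying on the super-path between $V_a$ and $V_b$, so it is a neighbour of some element of $F'$ in the correspondence. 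Hence the neighbourhood of $F'$ contains at least $m=|F'|$ edges of $S$, which is exactly Hall's condition. Applying the finite marriage theorem to each finite $F\ss E$ yields an injection $\phi_F$ as above, whence $\sum_{e\in F}\r(e)\le\r(S)$; taking the supremum over all finite $F$ gives $\r(T)\le\r(S)$.

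The main obstacle I anticipate is the bookkeeping forced by the infinite setting rather than any single hard idea: one must check that the crossing edges produced by the contraction genuinely lie in the neighbourhoods of edges of $F'$ (not merely of $F$), and one must justify the passage from finite subsets $F$ to all of $E$ by approximation, which is legitimate only because every summand $\r(e)$ and $\r(f)$ is nonnegative. If one prefers a single global injection rather than the approximation, note that each neighbourhood is finite: an $S$-edge crossing $\cP_T(e)$ has length at least $\r(e)>0$, and $\r(S)<\infty$ leaves only finitely many such edges, so the infinite form of Hall's theorem applies directly and delivers $\phi$ at once.
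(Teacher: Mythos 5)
Your proposal is correct, and its converse direction takes a genuinely different route from the paper's (the forward direction is the same in both: a citation of Assertion~\ref{ass:prop_mst}, Item~\eqref{ass:prop_mst:item:2}). The paper argues by contradiction and iteration: assuming a competitor $T'$ with $\r(T)-\r(T')=\e>0$, it repeatedly swaps edges of $T$ into the competitor --- exactness guaranteeing each swap does not increase the competitor's length --- verifies with some bookkeeping that transplanted edges of $T$ are never removed at later steps, and derives a contradiction because after $m$ steps the fixed deficit $\e$ is bounded by the sum of the lengths of the edges of $T$ not yet transplanted, which becomes smaller than $\e$ since $\r(T)<\infty$. You instead prove the inequality $\r(T)\le\r(S)$ directly for every competitor $S\in\cT(M)$ by producing a length-nondecreasing injection of edge sets via Hall's theorem, and your verification of Hall's condition is sound: contracting the parts of $\cP_T(F')$ turns $T$ into a tree with edge set $F'$ and $S$ into a connected graph on the same $|F'|+1$ supervertices; every $S$-edge joining two distinct parts connects the two sets of $\cP_T(e)$ for each $e\in F'$ on the super-path between its endpoints; and connectivity of the contracted $S$ forces at least $|F'|$ such crossing edges. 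What the paper's route buys is self-containedness: nothing beyond Assertion~\ref{ass:prop_trees} is used --- no marriage theorem, no compactness. What your route buys is a direct, non-iterative argument with a sharper separation of hypotheses: the bound $\sum_{e\in F}\r(e)\le\r(S)$ for finite $F\ss E$ uses no assumption on $\r(T)$ at all, so exactness of all edges already yields $\r(T)=\mst(M,\r)$ for an arbitrary tree, the hypothesis $\r(T)<\infty$ entering only through the definition of membership in $\MST(M,\r)$; in the paper's proof, by contrast, finiteness of $\r(T)$ is woven into the argument itself. A small remark: of your two variants, the finite-subset one (taking the supremum over finite $F$) is the cleaner, since it needs only the finite Hall theorem; the left-finiteness observation justifying a single global injection is correct but superfluous.
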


\begin{proof}
At first, let  $T\in\MST(M,\r)$. Then all the edges of the tree $T$ are exact in accordance with Item~\eqref{ass:prop_mst:item:2} of Assertion~\ref{ass:prop_mst}.

Now let all the edges of the tree $T$ be exact. Prove that $T\in\MST(M,\r)$. Assume the contrary, i.e.,  $T\not\in\MST(M,\r)$. Then there exists a tree $T'=(M,E')$ such that $\r(T')<\r(T)$. Put $\e=\r(T)-\r(T')$ and notice that $\e>0$.

Since $T$ and $T'$ are distinct trees with the same vertex set, then there exists $e_1\in E\sm E'$. Put $\cP_T(e_1)=\{M_1,M_2\}$, and let $e_1=v_1v_2$, $v_i\in M_i$. Due to Item~\eqref{ass:prop_trees:item:2} of Assertion~\ref{ass:prop_trees}, there exists an edge $e'_1\in E\bigl(T'[v_1,v_2]\bigr)$ connecting $M_1$ and $M_2$. Since $e'_1\ne e_1$, we have $e'_1\not\in E$ in accordance with the same Item of the same Assertion. Due to the assumptions, the edge $e_1$ is exact, so $\r(e_1)=\r(M_1,M_2)\le\r(e'_1)$. Due to Item~\eqref{ass:prop_trees:item:3} of Assertion~\ref{ass:prop_trees},  the graph $T'_1=T'[e'_1\to e_1]$ is a tree. Since $\r(e_1)\le\r(e'_1)$, we have $\r(T'_1)\le\r(T')$. Notice that the tree $T'_1$ contains at least one edge of the tree $T$.

Since $\r(T'_1)<\r(T)$, then we have $T'_1\ne T$, and so we can repeat the above procedure. Show that the tree $T'_m$ obtained at the $m$th step contains at least  $m$ edges of the tree $T$. To do that it suffices to verify that at each step we chose a new edge of the tree $T$ and do not remove from the corresponding $T'_i$ the edges of $T$ which have been added at the previous steps.

Assume the contrary, i.e., for some $i$ the above statement does not hold. Assume that $i$ is the first such number, hence   $e_1,\ldots,e_{i-1}$ are distinct edges of the tree $T$ belonging to the tree $T'_{i-1}$. Notice that the edge $e_i$ can not coincide with any $e_j$, $j<i$, because $e_i$ does not belong to the tree $T'_{i-1}$ by the construction. It remains to show that the edge chosen to change the edge $e'_i$ is not contained among the edges $e_j$, $j<i$. Since $e'_i$ connects the sets of the partition $\cP_T(e_i)$, and since those sets are connected by a unique edge of the tree $T$, namely, by the edge $e_i$ (see  Item~\eqref{ass:prop_trees:item:1} of Assertion~\ref{ass:prop_trees}), then we conclude: $e'_i\ne e_j$, $j<i$, that is required.

Thus, at the $m$th step of  the procedure described above  we construct a tree $T'_m$ containing at least $m$ edges of the tree  $T$ and such that $\r(T)-\r(T'_m)\ge\e$. It is clear that the value $\r(T)-\r(T'_m)$ does not exceed the sum of the lengths of the remaining edges of the tree $T$. On the other hand, since the length of the tree $T$ is finite, there exists a positive integer $N$ such that for any $m>N$ the sum of lengths of the remaining edges of the tree $T$ is less than $\e$. The contradiction obtained completes the proof.
\end{proof}

\begin{ass}\label{ass:Inf_degree}
Let $T=(M,E)\in\MST(M,\r)$ and $v\in M$, then $\deg v=\infty$, if and only if $\r\bigl(v,M\sm\{v\}\bigr)=0$.
\end{ass}

\begin{proof}
At first, assume that $\deg v=\infty$. If $\r\bigl(v,M\sm\{v\}\bigr)>0$, then $\r(T)=\infty$, a contradiction.

Now let $\r\bigl(v,M\sm\{v\}\bigr)=0$. Then there exists a sequence $v_1,\,v_2,\ldots$ of vertices $v_i\in M$ such that the number sequence $\r(v,v_i)$  monotony tends to $0$. Prove that $\deg v=\infty$. Assume the contrary, i.e., $\deg v<\infty$.

Let $w_1,\ldots,w_n$ be all the vertices of the tree $T$ adjacent with $v$. Put $d_i=\r(v,w_i)$ and $d=\min\{d_i\}>0$. Then there exists $v_k$ such that $\r(v,v_k)<d$, in particular, the vertex $v_k$ is not adjacent with $v$.

Notice that the path $T[v,v_k]$ contains some edge $vw_i$. Due to Assertion~\ref{ass:prop_trees}, Item~\eqref{ass:prop_trees:item:3}, the graph $T[vw_i\to vv_k]$ is a spanning tree, and the above implies that it is shorter than the tree $T$, a contradiction.
\end{proof}

Assertion~\ref{ass:Inf_degree}  can be generalized as follows.

\begin{ass}
Let $T=(M,E)\in\MST(M,\r)$, and let $M=M_1\sqcup M_2$ be some partition of the set $M$. Then $M_1$ and $M_2$ are connected by infinite number of edges in $T$, if and only if $\r(M_1,M_2)=0$.
\end{ass}

\begin{proof}
If  $M_1$ and $M_2$ are connected in $T$ by an infinite number of edges, but  $\r(M_1,M_2)>0$, then $\rho(T)=\infty$, a contradiction.

Conversely, let $\r(M_1,M_2)=0$, and $M_1$ is connected with $M_2$ in $T$ by a finite number of edges, say by  $e_1,\ldots,e_k$. It is clear that the lengths of these edges are separated from zero, so there exists a pair of points $v_i\in M_i$, $i=1,\;2$, such that $\rho(v_1,v_2)<\rho(e_j)$ for all $j=1,\ldots,k$.  Notice that the path $T[v_1,v_2]$ contains one of the edges  $e_j$. For this $j$ the graph  $T[e_j \to v_1v_2]$ is a spanning tree due to Assertion~\ref{ass:prop_trees}, Item~\eqref{ass:prop_trees:item:3}, and this tree is shorter than the tree $T$ in accordance with the above reasonings, a contradiction.
\end{proof}

\section{Weighted Graphs and Corresponding Metrics}
\markright{\thesection.~Weighted Graphs and Corresponding Metrics}
Let  $G=(M,E)$ be an arbitrary graph. Every function $\om\:E\to\R_+$ taking its values in the set $\R_+$ of non-negative reals is called a  \emph{weight function}. The triplet $G=(M,E,\om)$ is called a  \emph{weighted graph}. The {\em weight $\om(G)$} of the weighted graph $G$ is defined as the sum of the weights $\om(e)$ of all the edges $e\in E$, where the sum means the sum of the corresponding number series (if the corresponding series is divergent, then it is infinite). By $\cTW(M)$ we denote the set of all weighted trees $T=(M,E,\om)$ with \emph{positive\/} weight functions $\om$ such that $\om(T)<\infty$. Each $\r\in\cD(M)$ generates the weight function $\om(vw)=\r(v,w)$ on each tree $T=(M,E)$. If $(M,E)\in\MST(M,\r)$, then we sometimes also write  $(M,E,\r)\in\MST(M,\r)$.

For each weighted tree $T=(M,E,\om)\in\cTW(M)$ we define two metrics $\r^T_1,\,\r^T_\infty\in\cD(M)$ as follows (here we put $\max(\0)=0$):
$$
\r^T_1(v,w)=\sum_{e\in E\left(T[v,w]\right)}\om(e), \qquad
\r^T_\infty(v,w)=\max\Bigl\{\om(e)\mid e\in E\bigl(T[v,w]\bigr)\Bigr\}.
$$
It is easy to see that $\r^T_\infty\le\r^T_1$, therefore the set $\cD_T(M)\ss\cD(M)$ consisting of all the metrics  $\r$ such that  $\r^T_\infty\le\r\le\r^T_1$ is well-defined.

\begin{rk}
Notice that for any $e\in E$ the values $\r(e)$ are the same for all $\r\in\cD_T(M)$.
\end{rk}

\begin{thm}\label{thm:crit_mst_infty}
Let  $(M,\r)$ be a metric space and  $T=(M,E,\r)\in\cTW(M)$. Then $T\in\MST(M,\r)$, if and only if  $\r^T_\infty\le\r$.
\end{thm}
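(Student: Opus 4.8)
The plan is to reduce everything to the exactness criterion of Theorem~\ref{thm:crit_mst}. Note first that the hypothesis $T=(M,E,\r)\in\cTW(M)$ forces $\r(T)=\om(T)<\infty$, so Theorem~\ref{thm:crit_mst} is applicable: $T\in\MST(M,\r)$ is equivalent to all edges of $T$ being exact. Thus in both directions I would merely translate between the inequality $\r^T_\infty\le\r$ and the statement ``every edge of $T$ is exact,'' using the structural Assertions already established.

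For the ``only if'' direction, I would assume $T\in\MST(M,\r)$ and fix arbitrary $v,w\in M$. The path $T[v,w]$ has finitely many edges, so the maximum defining $\r^T_\infty(v,w)$ is attained at some $e^*\in E\bigl(T[v,w]\bigr)$. By Assertion~\ref{ass:prop_mst}, Item~\eqref{ass:prop_mst:item:1}, every edge on $T[v,w]$ has length at most $\r(v,w)$; applying this to $e^*$ yields $\r^T_\infty(v,w)=\r(e^*)\le\r(v,w)$. Since $v,w$ are arbitrary, $\r^T_\infty\le\r$.

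For the ``if'' direction, I would assume $\r^T_\infty\le\r$ and, by Theorem~\ref{thm:crit_mst}, prove that every edge is exact. Fix $e\in E$, write $\{M_1,M_2\}=\cP_T(e)$, and let $e=v_0w_0$ with $v_0\in M_1$, $w_0\in M_2$. The inequality $\r(e)=\r(v_0,w_0)\ge\r(M_1,M_2)$ is immediate. For the reverse, take arbitrary $a\in M_1$, $b\in M_2$: by Assertion~\ref{ass:prop_trees}, Item~\eqref{ass:prop_trees:item:4}, the edge $e$ lies on $T[a,b]$, hence $\r^T_\infty(a,b)=\max\bigl\{\r(e')\mid e'\in E(T[a,b])\bigr\}\ge\r(e)$, and the hypothesis gives $\r(a,b)\ge\r^T_\infty(a,b)\ge\r(e)$. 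Letting $a,b$ range over $M_1$ and $M_2$ yields $\r(M_1,M_2)\ge\r(e)$, so $\r(e)=\r(M_1,M_2)$ and $e$ is exact.

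The argument is short because the heavy lifting is delegated to Theorem~\ref{thm:crit_mst} and to two structural facts: the maximum over a path is always attained (a path is finite), and $e$ sits on \emph{every} cross-path $T[a,b]$ with $a\in M_1$, $b\in M_2$. The only point needing care—and the likely main obstacle—is the reverse inequality $\r(M_1,M_2)\ge\r(e)$: it must be verified for all pairs $a\in M_1$, $b\in M_2$ simultaneously rather than only for the chosen endpoints $v_0,w_0$, and this is exactly where Item~\eqref{ass:prop_trees:item:4}, guaranteeing $e\in E\bigl(T[a,b]\bigr)$, is indispensable.
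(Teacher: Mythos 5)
Your proposal is correct and follows essentially the same route as the paper: the forward direction bounds every edge on $T[v,w]$ by $\r(v,w)$ via Assertion~\ref{ass:prop_mst}, Item~\eqref{ass:prop_mst:item:1}, and the converse shows each edge is exact using Assertion~\ref{ass:prop_trees}, Item~\eqref{ass:prop_trees:item:4}, and then invokes Theorem~\ref{thm:crit_mst}. Your explicit remarks that $\r(T)<\infty$ (needed for Theorem~\ref{thm:crit_mst}) follows from $T\in\cTW(M)$, and that the correct reference for the first direction is Assertion~\ref{ass:prop_mst} rather than Assertion~\ref{ass:prop_trees} (the paper's citation there is a slip), are minor improvements in precision over the paper's own wording.
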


\begin{proof}
Let $T\in\MST(M,\r)$. Consider arbitrary $x,\,y\in M$. If  $x=y$ or $xy\in E$, then  $\r^T_\infty(x,y)=\r(x,y)$. If $xy\not\in E$, then, due to Assertion~\ref{ass:prop_trees}, for any  $e\in E\bigl(T[x,y]\bigr)$ we have  $\r(e)\le\r(x,y)$, and hence, $\r^T_\infty(x,y)\le\r(x,y)$.

Conversely, assume that $\r^T_\infty(x,y)\le\r(x,y)$ for any $x,\,y\in V$. Consider an arbitrary edge $e=vw\in T$ and put $\cP(e)=\{V_1,V_2\}$. Then for any $v_i\in V_i$ we have $\r(v_1,v_2)\ge\r^T_\infty(v_1,v_2)\ge\r(v,w)=\r(e)$, where the first inequality is valid due to our assumptions, and the second one holds because the edge $vw$ belongs to the path $T[v_1,v_2]$.  Therefore, $e$ is exact. It remains to apply Theorem~\ref{thm:crit_mst}.
\end{proof}

\begin{rk}
The triangle inequality implies immediately that $\r\le\r^T_1$ for any $T=(M,E,\r)$.
\end{rk}

\begin{cor}\label{cor:MSTctirMax1}
Let $M$ be an arbitrary set. Then
$$
\cD_\mst(M)=\bigcup_{T\in\cTW(M)}\cD_T(M).
$$
\end{cor}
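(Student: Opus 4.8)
The plan is to establish the two inclusions separately, both reducing to Theorem~\ref{thm:crit_mst_infty}.

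For the inclusion $\cD_\mst(M)\ss\bigcup_{T\in\cTW(M)}\cD_T(M)$, I would start with an arbitrary $\r\in\cD_\mst(M)$ and pick some $T=(M,E)\in\MST(M,\r)$. Viewing $T$ as a weighted tree with $\om(vw)=\r(v,w)$, I would first check that $T\in\cTW(M)$: the weights are positive because an edge joins two distinct points of a metric space, and $\om(T)=\r(T)=\mst(M,\r)<\infty$ by the definition of a minimal spanning tree. Then the forward direction of Theorem~\ref{thm:crit_mst_infty} gives $\r^T_\infty\le\r$, while the Remark following that theorem gives $\r\le\r^T_1$ from the triangle inequality. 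Hence $\r^T_\infty\le\r\le\r^T_1$, i.e.\ $\r\in\cD_T(M)$, so $\r$ lies in the union.

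For the reverse inclusion, I would take $\r\in\cD_T(M)$ for some $T=(M,E,\om)\in\cTW(M)$ and aim to produce a minimal spanning tree for $\r$, the natural candidate being $T$ itself. The key preliminary step is to observe that $\r$ agrees with $\om$ on every edge: for $e=vw\in E$ the path $T[v,w]$ consists of the single edge $e$, so $\r^T_\infty(v,w)=\om(e)=\r^T_1(v,w)$, and the defining inequalities $\r^T_\infty\le\r\le\r^T_1$ squeeze $\r(e)=\om(e)$ (this is exactly the content of the Remark recorded just before the theorem). Consequently $\r(T)=\om(T)<\infty$, so $(M,E,\r)\in\cTW(M)$, and---crucially---the auxiliary metric built from the weight function $\r$ on $T$ coincides with $\r^T_\infty$ built from $\om$, since both depend only on the edge weights, which now agree. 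The hypothesis $\r\in\cD_T(M)$ then yields $\r^T_\infty\le\r$, and the backward direction of Theorem~\ref{thm:crit_mst_infty} gives $T=(M,E,\r)\in\MST(M,\r)$. In particular $\MST(M,\r)\ne\0$, so $\r\in\cD_\mst(M)$.

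The only place demanding care is the bookkeeping about which weight function generates $\r^T_\infty$: Theorem~\ref{thm:crit_mst_infty} is stated for the weighted tree $(M,E,\r)$ whose weights are $\r$ itself, whereas in the reverse inclusion the metric $\r^T_\infty$ appearing in the definition of $\cD_T(M)$ is generated by $\om$. Verifying that $\r$ and $\om$ coincide on the edge set is what reconciles the two viewpoints, and this is the main (though short) obstacle; once it is in place, everything else is a direct appeal to Theorem~\ref{thm:crit_mst_infty} and the triangle-inequality Remark.
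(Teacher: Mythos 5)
Your proposal is correct and follows essentially the same route as the paper: both directions reduce to Theorem~\ref{thm:crit_mst_infty}, with the triangle-inequality remark $\r\le\r^T_1$ handling the forward inclusion and the observation that $\r$ agrees with $\om$ on $E$ (so that $(M,E,\r)$ and $(M,E,\om)$ are the same weighted tree) handling the reverse one. Your write-up is in fact slightly more explicit than the paper's, which asserts $\r|_E=\om$ by appeal to its preceding Remark rather than by the squeeze argument you spell out.
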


\begin{proof}
Let $\r\in\cD_\mst(M)$ and $T=(M,E,\r)\in\MST(M,\r)$. Then, due to Theorem~\ref{thm:crit_mst_infty}, $\r\ge\r^T_\infty$. Since we always have $\r\le\r^T_1$,  then $\r\in\cD_T(M)$, and hence, the left hand side of the equality we are proving is contained in its right hand side.

Conversely, let $\r\in\cD_T(M)$ for some weighted tree $T=(M,E,\om)\in\cTW(M)$. Then $\r|_E=\om$, and hence, the weighted tree $S=(M,E,\r)$ coincides with $T$, so $\r^T_\infty=\r^S_\infty$. Therefore, $\r\ge\r^S_\infty$ and, due to Theorem~\ref{thm:crit_mst_infty}, we have $T\in\MST(M,\r)$, so $\r\in\cD_\mst(M)$.
\end{proof}

\begin{examp}\label{examp:any_tree}
Let $(M,E)$ be an arbitrary tree with a countable set of edges $E=\{e_n\}_{n\in\N}$ enumerated in an arbitrary way, and $\{\om_n\}_{n\in\N}$ be a positive number sequence such that $\sum_{n=1}^\infty\om_n<\infty$. Put $\om(e_n)=\om_n$, $n\in\N$, and let $T=(M,E,\om)$ be the corresponding weighted tree. Since $\r^T_\infty\le\r^T_1$, then Theorem~\ref{thm:crit_mst_infty} implies that $T\in\MST(M,\r^T_1)$. Notice that the same construction can be applied to any finite tree too. Thus, \emph{any tree with at most countable set of edges is isomorphic to some minimal spanning tree}.
\end{examp}

\section{Sufficient Condition of Minimal Spanning Tree Existence}
\markright{\thesection.~Sufficient Condition of Minimal Spanning Tree Existence}
We need the following Lemma.

\begin{lem}\label{lem:e-partit}
Let  $T=(M,E)$ be a tree, $\r\in\cD(M)$, and $f\in E$ be an exact edge with respect to the metric $\r$. Let  $e\in E$, $e\ne f$,  $\cP_T(e)=\{M_1,M_2\}$, and let  $\r(M_1,M_2)=\r(m_1,m_2)$ for some points $m_i\in M_i$. Then $f$ is an exact edge of the tree $S=T[e\to m_1m_2]$ too.
\end{lem}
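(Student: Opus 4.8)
The plan is to work directly from the definition of exactness. Writing $\{N_1,N_2\}=\cP_S(f)$, I must show $\r(f)=\r(N_1,N_2)$. One inequality is free: since $f\ne e$, the edge $f$ is still an edge of $S$, and deleting $f$ from the tree $S$ separates its two endpoints into $N_1$ and $N_2$, so $\r(N_1,N_2)\le\r(f)$. Thus everything reduces to proving $\r(N_1,N_2)\ge\r(f)$, i.e.\ that $\r(x,y)\ge\r(f)$ for every $x\in N_1$ and $y\in N_2$.

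To control $\cP_S(f)$ I would first fix the combinatorial picture. Put $\{V_1,V_2\}=\cP_T(f)$; exactness of $f$ in $T$ means $\r(f)=\r(V_1,V_2)$. Since $e\ne f$, the edge $e$ cannot join $V_1$ and $V_2$ (the only edge joining them is $f$, by Assertion~\ref{ass:prop_trees}, Item~\eqref{ass:prop_trees:item:1}), so both endpoints of $e$ lie in one part, say $V_1$. Then $V_2$ remains connected in $T\sm e$, hence lies inside one of $M_1,M_2$; label so that $V_2\ss M_1$. Setting $P=V_1\cap M_1$ and $Q=V_1\cap M_2=M_2$, the three trees of $T\sm\{e,f\}$ are exactly $V_2$, $P$, $Q$, the edge $e$ joins $P$ and $Q$, and $M_1=V_2\cup P$, $M_2=Q$. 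Finally $m_2\in Q$, while $m_1\in M_1=V_2\cup P$, and $S\sm f$ arises from the three trees $V_2,P,Q$ by re-attaching the single edge $m_1m_2$.

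Now I would split on the location of $m_1$. If $m_1\in P$, then $m_1m_2$ reglues $P$ and $Q$ into $V_1$, so $\cP_S(f)=\{V_1,V_2\}=\cP_T(f)$ and exactness of $f$ in $S$ is immediate. If $m_1\in V_2$, then $m_1m_2$ glues $V_2$ to $Q$, so $\cP_S(f)=\{V_2\cup Q,\,P\}$. For a cross pair $x,y$ with $y\in P$ and $x\in V_2$ one has $\r(x,y)\ge\r(V_1,V_2)=\r(f)$ because $P\ss V_1$. The only genuinely new pairs are $x\in Q$, $y\in P$, and for these I claim $\r(x,y)\ge\r(P,Q)\ge\r(f)$. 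To get $\r(P,Q)\ge\r(f)$: since $m_1\in V_2\ss M_1$ and $m_2\in Q=M_2$ realize $\r(M_1,M_2)$, one has $\r(M_1,M_2)=\r(m_1,m_2)=\r(V_2,Q)$; as $Q\ss V_1$ this yields $\r(M_1,M_2)=\r(V_2,Q)\ge\r(V_1,V_2)=\r(f)$, while $P\ss M_1$ and $Q=M_2$ give $\r(M_1,M_2)\le\r(P,Q)$, so $\r(P,Q)\ge\r(f)$ after chaining. The case $e\ss V_2$ is symmetric.

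I expect the only real obstacle to be the subcase $m_1\in V_2$, the one in which $\cP_S(f)$ genuinely differs from $\cP_T(f)$. Here the verification must bound distances \emph{within} $V_1$ (namely $\r(P,Q)$), which are not controlled by exactness of $f$ alone; the point is that these are reached precisely by feeding in the attainment hypothesis $\r(M_1,M_2)=\r(m_1,m_2)$ together with the inclusion $V_2\ss M_1$ that pins $\r(M_1,M_2)$ to $\r(V_2,Q)$.
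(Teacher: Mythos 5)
Your proof is correct and follows essentially the same route as the paper's: both decompose $T\sm\{e,f\}$ into three subtrees (your $V_2$, $P$, $Q$ are the paper's $V_3$, $V_2$, $V_1$), split into the two cases according to which subtree the attaining endpoint falls in, and in the nontrivial case combine exactness of $f$ in $T$ with the attainment hypothesis $\r(M_1,M_2)=\r(m_1,m_2)$ to bound the new cross-pairs. The only difference is cosmetic: you reach the paper's key inequality $\r(x,y)\ge\r(M_1,M_2)=\r(m_1,m_2)\ge\r(f)$ by a slightly longer chain through $\r(P,Q)$ and $\r(V_2,Q)$.
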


\begin{proof}
Let  $G_i=(V_i,E_i)$, $i=1,\,2,\,3$, be the trees of the forest $T\sm\{e,f\}$. Without loss of generality assume that the edge $e$ connects the sets $V_1$ and $V_2$, and the edge $f$ connects the sets $V_2$ and $V_3$. Then $M_1=V_1$, $M_2=V_2\cup V_3$, $\cP_T(f)=\{V_1\cup V_2,\,V_3\}$. There are two possibilities:

{\bf (1)} $m_2\in V_2$. Then $\cP_T(f)=\cP_S(f)$, and hence, the edge $f$ remains exact.

{\bf (2)} $m_2\in V_3$. Then $\cP_S(f)=\{V_1\cup V_3,V_2\}$.  We need to show that for arbitrary vertices  $x\in V_1\cup V_3$ and $y\in V_2$ the inequality $\r(x,y)\ge\r(f)$ is valid. Indeed, let  $x\in V_3$, then $\r(x,y)\ge\r(V_3,V_2)\ge\r(V_3,V_1\cup V_2)=\r(f)$. Now, let $x\in V_1$. Then
$$
\r(x,y)\ge\r(M_1,M_2)=\r(m_1,m_2)\ge\r(V_1\cup V_2,V_3)=\r(f).
$$
\end{proof}

\begin{thm}\label{thm:distance}
Let $(M,\r)$ be a good metric space. Assume that for any partition of the set $M$ into non-empty subsets $M_1$ and $M_2$ the distance between those subsets is attained. Then a minimal spanning tree on $M$ does exist.
\end{thm}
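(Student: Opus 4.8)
The plan is to start from any spanning tree of finite length---one exists because $(M,\r)$ is good, and recall that a good space is at most countable---and to deform it, one edge at a time, into a spanning tree all of whose edges are exact; by Theorem~\ref{thm:crit_mst} such a tree lies in $\MST(M,\r)$. If $M$ is finite the deformation terminates after finitely many steps and nothing further is needed, so the real content is controlling the limit when $M$ is countably infinite.

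The elementary step is as follows. Fix a finite-length spanning tree $S$ and a \emph{longest} non-exact edge $e$ of $S$ (a longest one exists because $\r(S)<\infty$ forces each $T$ to have only finitely many edges above any positive threshold). Writing $\cP_S(e)=\{M_1,M_2\}$, the hypothesis supplies $m_i\in M_i$ with $\r(m_1,m_2)=\r(M_1,M_2)$, and by Assertion~\ref{ass:prop_trees}, Items~\eqref{ass:prop_trees:item:3} and~\eqref{ass:prop_trees:item:4}, the graph $S[e\to m_1m_2]$ is again a spanning tree. Three facts make the step productive: the new edge $m_1m_2$ is exact, since $\cP_{S[e\to m_1m_2]}(m_1m_2)=\cP_S(e)$ by Item~\eqref{ass:prop_trees:item:5}; every previously exact edge stays exact, by Lemma~\ref{lem:e-partit}; and $\r(m_1m_2)=\r(M_1,M_2)<\r(e)$, so the length strictly drops. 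Iterating from $T_0:=S$ I get trees $T_0,T_1,T_2,\ldots$ with $\r(T_n)\le\r(T_0)$ in which the family of non-exact edges can only shrink.

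The quantitative consequence is that every length scale is cleared in finite time: fix $\e>0$; each $T_n$ has only finitely many edges of length $\ge\e$, and as long as a non-exact edge of length $\ge\e$ survives, the step removes the longest one and creates no new non-exact edge, so their number strictly decreases. Hence after some step $N_\e$ no non-exact edge of length $\ge\e$ remains, and since later replacements only touch shorter edges, the set of edges of length $\ge\e$ is frozen from then on. I then let $E_\infty$ be the union over $k$ of the frozen sets of edges of length $\ge 1/k$ and set $T_\infty=(M,E_\infty)$; every edge of $E_\infty$ is exact in $T_n$ for all large $n$, and $\r(T_\infty)\le\r(T_0)<\infty$.

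It remains to show $T_\infty$ is a spanning tree whose edges are exact \emph{in $T_\infty$ itself}, and this is where I expect the real work, since exactness depends on the ambient tree and $\cP_{T_\infty}(e)$ need not equal $\cP_{T_n}(e)$. Acyclicity is immediate (a cycle is finite, hence contained in some $T_n$). For connectedness I argue by contradiction: if $T_\infty$ splits into nonempty parts $A$ and $B$, the hypothesis gives $\r(A,B)=\r(a,b)=d>0$, yet each spanning tree $T_n$ has an edge across this cut, of length $\ge d$; for $n\ge N_d$ such a crossing edge lies in the frozen set and hence in $E_\infty$, contradicting the absence of $A$--$B$ edges in $E_\infty$. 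Finally, for exactness the attainability hypothesis is used once more: given $\cP_{T_\infty}(e)=\{A,B\}$, choose $a\in A$, $b\in B$ with $\r(a,b)=\r(A,B)$; the \emph{finite} path $T_\infty[a,b]$ lies inside $T_n$ for all large $n$, so $e$ separates $a$ from $b$ in $T_n$ as well, and exactness of $e$ in $T_n$ forces $\r(a,b)\ge\r(e)$, whence $\r(A,B)=\r(e)$. With all edges of $T_\infty$ exact and $\r(T_\infty)<\infty$, Theorem~\ref{thm:crit_mst} yields $T_\infty\in\MST(M,\r)$.
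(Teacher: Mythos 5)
Your proof is correct, and it rests on the same pillars as the paper's own argument: the replacement step $S\mapsto S[e\to m_1m_2]$ with $m_1m_2$ attaining $\r(M_1,M_2)$, Lemma~\ref{lem:e-partit} to keep already-exact edges exact, passage to a limit graph, attainability of distances to prove the limit is connected, and Theorem~\ref{thm:crit_mst} to conclude. The genuine difference is the scheduling and, consequently, the way the limit tree is extracted. The paper fixes once and for all an enumeration $e_1,e_2,\ldots$ of the edges of the initial tree $G$ and exactifies them in that order, so the limit edge set is simply the list $\{e'_1,e'_2,\ldots\}$ of replacements, and connectedness is proved by tracking the finitely many edges of $G$ crossing a given cut. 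You instead always replace the longest non-exact edge; the payoff is the stabilization (``freezing'') of the edge set above every length threshold $1/k$, which gives you both the definition of $E_\infty$ and a cleaner connectedness argument (any edge of $T_n$ crossing the cut has length at least $d$ and is therefore frozen into $E_\infty$). Your version also makes explicit a point the paper passes over silently: exactness is relative to the ambient tree, so one must check that edges of $E_\infty$ are exact \emph{in $T_\infty$}, not merely in the approximating trees $T_n$; your finite-path argument does precisely this, and incidentally it does not even need attainability at that step, since for arbitrary $x\in A$, $y\in B$ the path $T_\infty[x,y]$ lies in $T_n$ for large $n$ and contains $e$, giving $\r(x,y)\ge\r(e)$ and hence $\r(A,B)\ge\r(e)$ as an infimum. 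In short, the two proofs share a strategy but differ in bookkeeping: yours trades the paper's simpler description of the limit set for a scale-by-scale stabilization that makes the limit's properties easier to verify.
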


\begin{proof}
Since $M$ is good, then there exists a tree $G=(M,E)$ of a finite length. Enumerate the edges of the tree $G$ in an arbitrary way and put $E=\{e_1,\dots\}$. Construct trees $G_i$, $i=1,\dots$, in the following way.

If the edge $e_1$ is exact, then we put $e'_1=e_1$ and $G_1=G$. If the edge $e_1$ is not exact, then consider the partition  $\cP_G(e_1)=\{M_1,M_2\}$, find a pair of points $m_j\in M_j$, $j=1,\,2$, such that the distance between  $M_1$ and $M_2$ is attained at it, and put $G_1=G[e_1\to m_1m_2]$. Since $\r(m_1m_2)<\r(e_1)$, then $G_1$ is shorter than $G$. Put $e_1'=m_1m_2$. By definition of the points $m_i$, the edge $e'_1$ is exact. Thus, $G_1$ is shorter than $G$ and its first edge  $e'_1$ is exact.

Assume that a tree $G_{i-1}$ on $M$ is already constructed, its edge set is $\{e'_1,\ldots,e'_{i-1},e_i,\ldots\}$, and the  edges $e'_1,\ldots,e'_{i-1}$ are exact.  Proceed in the same way for the edge $e_i$. Namely, if the edge $e_i$ is exact, then we put $e'_i=e_i$ and $G_i=G_{i-1}$. If the edge $e_i$ is not exact, then consider the partition $\cP_{G_{i-1}}(e_i)=\{M_1,M_2\}$, find a pair of point $m_j\in M_j$, $j=1,\,2$, such that the distance between $M_1$ and $M_2$ is attained at it, and put $G_i=G_{i-1}[e_i\to m_1m_2]$. Since $\r(m_1m_2)<\r(e_i)$, then $G_i$ is shorter than $G_{i-1}$. Put $e_i'=m_1m_2$. Due to definition of the points $m_i$, the edge $e'_i$ is exact. Besides, due to Lemma~\ref{lem:e-partit}, the edges $e'_1,\ldots,e'_{i-1}$ are exact in $G_i$ too. Thus, we obtain the tree $G_i$, such that its first $i$ edges $e'_1,\ldots,e'_i$ are exact, and whose length does not exceed the length of $G$.

Notice that at each step, say the $i$th step, of the procedure described above we do not change the edges $e'_p$, $p<i$, therefore the edges $e'_1,\ldots,e'_p$ are contained in any tree $G_i$, $i\ge p$.

Thus, for each $i$ we defined a pair $e'_i$ of points from the set $M$. Put $E'=\{e'_i\}$ and consider the graph $G'=(M,E')$. Let us show that $G'$ is a tree. We do it in two steps.

(1) The \emph{graph $G'$ does not contain cycles}. Indeed, if the graph $G'$ contains a cycle, then this cycle consists of a finite set of edges $e'_{i_1},\ldots,e'_{i_k}$, and in accordance with the above remark all those edges are contained in the tree $G_j$, where $j=\max_p i_p$, a contradiction.

(2) The \emph{graph $G'$ is connected}. Assume the contrary, and let $M=M_1\sqcup M_2$ be a partition of the set $M$ such that $M_1$ and $M_2$ are not connected by edges from $E'$. Due to our assumptions, the distance between $M_1$ and $M_2$ is attained, and hence, it is positive. Since the graph $G$ is connected, then the sets $M_1$ and $M_2$ are connected by some edges from $E$. Further, since the distance between $M_1$ and $M_2$ is positive, then the set of such edges is finite  (otherwise the length of the tree $G$ is infinite). Let $e_{i_1},\ldots,e_{i_k}$ be those edges. Put $j=\max_p i_p$ and consider the tree $G_j$. Since the graph $G_j$ is connected, then there exists an edge $f\in E(G_j)$ connecting $M_1$ and $M_2$. Recall that $E(G_j)=\{e'_1,\ldots,e'_j,\,e_{j+1},\ldots\}$. Since $\{e_{i_1},\ldots,e_{i_k}\}$ is the set of all the edges from $G$ connecting $M_1$ and $M_2$, then the edge $f$ can not be equal to any of $e_p$, $p>j$, therefore $f$ coincides with some $e'_q$, $q\le j$, and hence, it belongs to $E'$, a contradiction.

Thus,  $G'$ is a tree, its length is finite (because it does not exceed the length of the tree $G$), and each edge of $G'$ is exact. Theorem~\ref{thm:crit_mst} implies that $G'$ is a minimal spanning tree, the proof is completed.
\end{proof}

\begin{rk}
The sufficient condition of Theorem~\ref{thm:distance} is not necessary. As an example we consider a metric space $M$ consisting of all the vertices of the weighted tree $T$, whose edges $e_k$ have weights $1/k^2$, $k=1,\ldots$,  and meet at the unique common vertex $m$, and all the remaining vertices have degree $1$. Define the distance function to be equal to the weights of the paths connecting the corresponding vertices in the tree $T$. Then $T$ is a minimal spanning tree, but the distance between $M_1=\{m\}$ and $M_2=M\sm\{m\}$ is equal to zero and is not attained.
\end{rk}

\section{Locally Minimal Spanning Trees}
\markright{\thesection.~Locally Minimal Spanning Trees}
In the present section we generalize the concept of minimal spanning tree as follows. Let $(M,\rho)$ be a metric space. A tree  $T=(M,E)$ is called a  {\it locally minimal spanning tree}, if for any pair of its vertices $m$ and $m'$ all the edges of the path   $T[m,m']$ are not longer than the distance between $m$ and $m'$. The latter means that the tree can not be ``locally'' shorten be adding a short edge and discarding a longer one from the cycle appeared. Assertion~\ref{ass:prop_mst}, Item~\ref{ass:prop_mst:item:1}, implies that any minimal spanning tree is locally minimal.

\begin{thm}\label{thm:exact}
Let $(M,\rho)$ be a metric space. A tree $T=(M,E)$ is a locally minimal spanning tree, if and only if all its edges are exact.
\end{thm}

\begin{proof}
Let $T$ be a locally minimal spanning tree, $e=m_1m_2$ be its edge, $\cP_T(M)=\{M_1,M_2\}$, and $m_i\in M_i$, $i=1,\;2$. Take any pair of points $v_i\in M_i$, $i=1,\,2$. Due to Assertion~\ref{ass:prop_trees}, Item~\ref{ass:prop_trees:item:4}, the path $T[v_1,v_2]$ contains the edge $e$, and hence, due to locally minimality definition, $\rho(m_1,m_2)\le\rho(v_1,v_2)$, so $\rho(m_1,m_2)=\rho(M_1,M_2)$ as it is required.

Conversely, let all the edges of a tree $T$ be exact. Take an arbitrary pair of vertices $m_1$ and $m_2$ from $M$, consider the path $T[m_1,m_2]$, and let $e\in E\bigl(T[m_1,m_2]\bigr)$. Notice that the vertices $m_1$ and $m_2$ lie in different components of the partition $\cP_T(e)$, so $\r(e)\le\r(m_1,m_2)$, that means locally minimality of the tree $T$ because of arbitrariness  of the choice of $m_i$.
\end{proof}

Theorems~\ref{thm:crit_mst} and~\ref{thm:exact} imply the following result.

\begin{cor}\label{cor:lmst=mst}
A locally minimal spanning tree of a finite length is a minimal spanning tree.
\end{cor}

\section{Acknowledgements}
\markright{\thesection.~Acknowledgements}
The authors are sincerely appreciate to Academician A.\,T.~Fomenko for his kind attention to their work. The work is partly supported by RFBR, Project~13--01--00664a, and also by the President of RF Program supporting Leading Scientific Schools of RF, Project~NSh--581.2014.1.

\end{document}